\newtheorem{theorem}{Theorem}
\newtheorem{corollary}[theorem]{Corollary}
\newtheorem{lemma}[theorem]{Lemma}
\newtheorem{problem}{Problem}
\newenvironment{proof}[1][Proof.]{\begin{trivlist}
\item[\hskip \labelsep {\bfseries #1}]}{\end{trivlist}}
\newcommand{\AmS}{{\protect\the\textfont2
  A\kern-.1667em\lower.5ex\hbox{M}\kern-.125emS}}
\title{Interval colorings of complete balanced multipartite graphs}
\author{Petros A. Petrosyan\address[MCSD]{Institute for Informatics and Automation Problems,\\
National Academy of Sciences, 0014, Armenia}%
\address{Department of Informatics and Applied Mathematics,\\
Yerevan State University, 0025, Armenia}%
\thanks{email: pet\_petros@\{ipia.sci.am, ysu.am, yahoo.com\}}}
\begin{document}

\maketitle

\begin{abstract}
A graph $G$ is called a complete $k$-partite ($k\geq 2$) graph if
its vertices can be partitioned into $k$ independent sets
$V_{1},\ldots,V_{k}$ such that each vertex in $V_{i}$ is adjacent to
all the other vertices in $V_{j}$ for $1\leq i<j\leq k$. A complete
$k$-partite graph $G$ is a complete balanced $k$-partite graph if
$\vert V_{1}\vert =\vert V_{2}\vert =\cdots =\vert V_{k}\vert$. An
edge-coloring of a graph $G$ with colors $1,\ldots,t$ is an interval
$t$-coloring if all colors are used, and the colors of edges
incident to each vertex of $G$ are distinct and form an interval of
integers. A graph $G$ is interval colorable if $G$ has an interval
$t$-coloring for some positive integer $t$. In this paper we show
that a complete balanced $k$-partite graph $G$ with $n$ vertices in
each part is interval colorable if and only if $nk$ is even. We also
prove that if $nk$ is even and $(k-1)n\leq t\leq
\left(\frac{3}{2}k-1\right)n-1$, then a complete balanced
$k$-partite graph $G$ admits an interval $t$-coloring. Moreover, if
$k=p2^{q}$, where $p$ is odd and $q\in \mathbb{N}$, then a complete
balanced $k$-partite graph $G$ has an interval $t$-coloring for each
positive integer $t$ satisfying
$(k-1)n\leq t\leq \left(2k-p-q\right)n-1$.\\

Keywords: edge-coloring, interval coloring, complete multipartite
graph, complete graph, complete bipartite graph

\end{abstract}

\section{Introduction}\

Throughout this paper all graphs are finite, undirected, and have no
loops or multiple edges. Let $V(G)$ and $E(G)$ denote the sets of
vertices and edges of $G$, respectively. For $F\subseteq E(G)$, the
subgraph obtained by deleting the edges of $F$ from $G$ is denoted
by $G-F$. The maximum degree of $G$ is denoted by $\Delta(G)$. The
terms and concepts that we do not define can be found in \cite{b19}.

An edge-coloring of a graph $G$ is a mapping $\alpha:E(G)\rightarrow
\mathbb{N}$. A proper edge-coloring of a graph $G$ is an
edge-coloring $\alpha$ of $G$ such that $\alpha(e)\neq
\alpha(e^{\prime})$ for any pair of adjacent edges $e,e^{\prime}\in
E(G)$. The edge-chromatic number $\chi^{\prime}(G)$ of $G$ is the
least number of colors needed for a proper edge-coloring of $G$.
Clearly, $\chi^{\prime}(G)\geq \Delta(G)$ for every graph $G$. On
the other hand, the well-known theorem of Vizing \cite{b16} states
that the edge-chromatic number of any graph $G$ is either
$\Delta(G)$ or $\Delta(G)+1$. One of the most important, interesting
and long-standing problem in this field is the problem of
determining the exact value of the edge-chromatic number of graphs.
There are many results in this direction, in particular, the exact
value of the edge-chromatic number is known for bipartite graphs
\cite{b13}, complete graphs \cite{b4,b18}, complete multipartite
graphs \cite{b9,b14}, split graphs with odd maximum degree
\cite{b5}, outerplanar graphs \cite{b7}, planar graphs $G$ with
$\Delta(G)\geq 7$ \cite{b17,b20}.

A graph $G$ is a complete $k$-partite ($k\geq 2$) graph if its
vertices can be partitioned into $k$ independent sets
$V_{1},\ldots,V_{k}$ such that each vertex in $V_{i}$ is adjacent to
all the other vertices in $V_{j}$ for $1\leq i<j\leq k$. A complete
$k$-partite graph $G$ is a complete balanced $k$-partite graph if
$\vert V_{1}\vert =\vert V_{2}\vert =\cdots =\vert V_{k}\vert$.
Clearly, if $G$ is a complete balanced $k$-partite graph with $n$
vertices in each part, then $\Delta(G)=(k-1)n$. Note that the
complete graph $K_{n}$ and the complete balanced bipartite graph
$K_{n,n}$ are special cases of the complete balanced $k$-partite
graph. In \cite{b14}, Laskar and Hare proved the following:

\begin{theorem}
\label{mytheorem1} If $G$ is a complete balanced $k$-partite graph
with $n$ vertices in each part, then
\begin{center}
$\chi^{\prime}(G)=\left\{
\begin{tabular}{ll}
$(k-1)n$, & if $nk$ is even, \\
$(k-1)n+1$, & if $nk$ is odd. \\
\end{tabular}%
\right.$
\end{center}
\end{theorem}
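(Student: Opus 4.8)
The plan is to pin $\chi'(G)$ between the two values allowed by Vizing's theorem \cite{b16}, namely $\Delta(G)=(k-1)n$ and $\Delta(G)+1=(k-1)n+1$, and to resolve the choice by the parity of $nk$. The lower bound $\chi'(G)\ge(k-1)n$ is free, so the entire content splits as follows: when $nk$ is odd, prove $\chi'(G)\ge(k-1)n+1$; when $nk$ is even, prove $\chi'(G)\le(k-1)n$, i.e.\ exhibit a proper edge-coloring with exactly $(k-1)n$ colors. Since $G$ is $(k-1)n$-regular, the even case is equivalent to producing a $1$-factorization of $G$ into $(k-1)n$ perfect matchings.

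In the odd case both $n$ and $k$ are odd, so $G$ has an odd number $nk$ of vertices. Then every color class of a proper edge-coloring is a matching, hence saturates at most $nk-1$ vertices and so contains at most $(nk-1)/2$ edges. Comparing with $|E(G)|=\tfrac12(k-1)n\cdot nk$, a coloring with $t$ colors forces $t(nk-1)/2\ge|E(G)|$, whence $t\ge(k-1)n\cdot\tfrac{nk}{nk-1}>(k-1)n$, and therefore $t\ge(k-1)n+1$. Together with Vizing this yields $\chi'(G)=(k-1)n+1$.

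For the even case I will build the $1$-factorization, and the engine is the following routine lemma: if $m$ is even, then for every $p\ge1$ the complete balanced $m$-partite graph $K_{m\times p}$ is $1$-factorable. To see it, fix a $1$-factorization of $K_m$ into $m-1$ perfect matchings (it exists since $m$ is even); blowing up each super-vertex into $p$ vertices turns each such matching into a vertex-disjoint union of copies of $K_{p,p}$, and by König's theorem each $K_{p,p}$ splits into $p$ perfect matchings, yielding $(m-1)p$ perfect matchings of $K_{m\times p}$. Now if $nk$ is even then $k$ is even or $n$ is even, and if $k$ is even the lemma with $m=k$, $p=n$ finishes immediately.

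The remaining, and genuinely delicate, case is $n$ even (which covers $k$ odd). Here I would realize $G$ on an \emph{even} number of parts by halving: split each part $V_i$ into two halves of size $n/2$, so that $G$ is exactly the complete balanced $(2k)$-partite graph $K_{2k\times(n/2)}$ with the edges joining the two halves of each original part deleted. Those deleted edges are the blow-up $\widetilde N$ of the single perfect matching $N$ of $K_{2k}$ that pairs the two halves of each part, so $\widetilde N$ is a union of $n/2$ perfect matchings of the vertex set. The crux is to run the lemma's construction starting from a $1$-factorization of $K_{2k}$ that \emph{contains} $N$; such a factorization exists because the symmetric group acts transitively on the perfect matchings of $K_{2k}$, so one may carry any given $1$-factorization onto one whose first factor is $N$. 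Blowing up and refining as before produces a $1$-factorization of $K_{2k\times(n/2)}$ into $(2k-1)(n/2)$ perfect matchings, exactly $n/2$ of which assemble to $\widetilde N$; deleting those leaves $(2k-2)(n/2)=(k-1)n$ perfect matchings that $1$-factorize $G$. I expect this halving step — arranging that the deleted within-part edges form entire factors of the ambient even-part factorization — to be the main obstacle, whereas the $k$-even subcase and the odd lower bound are routine.
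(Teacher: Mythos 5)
Your proof is correct, but note that the paper does not prove this statement at all: Theorem~\ref{mytheorem1} is quoted from Laskar and Hare \cite{b14}, so there is no in-paper argument to match yours against. Your three ingredients are all sound: the odd case is the standard overfull/counting bound $t\geq (k-1)n\cdot nk/(nk-1)>(k-1)n$ combined with Vizing; the $k$-even case is the blow-up of a $1$-factorization of $K_k$ refined through K\"onig on the resulting copies of $K_{n,n}$; and the $n$-even case correctly reduces $G$ to $K_{2k\times (n/2)}$ minus the blow-up of one perfect matching $N$ of $K_{2k}$, with the transitivity of $\mathrm{Sym}(2k)$ on perfect matchings legitimately supplying a $1$-factorization of $K_{2k}$ containing $N$ as a factor. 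It is worth observing that your halving step is exactly the identification the paper does use later, in Case~2 of the proof of Theorem~\ref{mytheorem7}, where $G$ (for $n=2m$) is viewed as $K_{2k}-F$ with each super-vertex blown up to $m$ vertices; and the paper effectively re-derives the present theorem as Corollary~\ref{mycorollary1}, by building explicit \emph{interval} colorings in Theorem~\ref{mytheorem7} and then invoking Theorem~\ref{mytheorem3}(1) together with the overfull observation. The trade-off is that your route is more elementary and self-contained (plain $1$-factorizations suffice, no consecutivity of colors needs to be tracked), while the paper's route proves a stronger structured coloring from which the chromatic index statement falls out as a by-product.
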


A more general result was obtained by Hoffman and Rodger \cite{b9}.
Before we formulate this result we need a definition of the overfull
graph. A graph $G$ is overfull if $\vert
E(G)\vert>\left\lfloor\frac{\vert
V(G)\vert}{2}\right\rfloor\Delta(G)$. Clearly, if $G$ is overfull,
then $\chi^{\prime}(G)=\Delta(G)+1$.

\begin{theorem}
\label{mytheorem2} If $G$ is a complete $k$-partite graph, then
\begin{center}
$\chi^{\prime}(G)=\left\{
\begin{tabular}{ll}
$\Delta(G)$, & if $G$ is not overfull, \\
$\Delta(G)+1$, & if $G$ is overfull. \\
\end{tabular}%
\right.$
\end{center}
\end{theorem}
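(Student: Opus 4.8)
The plan is to combine Vizing's theorem with a parity analysis of $N:=|V(G)|$ and an explicit (near-)$1$-factorization of a regularized supergraph. Order the parts so that $n_1\ge\cdots\ge n_k$; then $\Delta(G)=N-n_k$ and the only non-edges of $G$ lie inside the parts. Since $\chi'(G)\in\{\Delta,\Delta+1\}$ by Vizing and every color class is a matching of at most $\lfloor N/2\rfloor$ edges, a graph with more than $\lfloor N/2\rfloor\Delta$ edges needs at least $\Delta+1$ colors; hence the overfull case forces $\chi'(G)=\Delta+1$ and is immediate. The whole content is therefore the converse: if $G$ is not overfull, then $G$ is Class $1$, i.e. $\chi'(G)=\Delta(G)$.

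First I would treat $N$ even. A short computation shows that a complete multipartite graph of even order is never overfull: $|E(G)|=\tfrac12\bigl(N^2-\sum_i n_i^2\bigr)$ and $\sum_i n_i^2\ge n_k\sum_i n_i=n_kN$, so $|E(G)|\le\tfrac{N}{2}(N-n_k)=\lfloor N/2\rfloor\Delta$. The goal is then to produce a proper $\Delta$-edge-coloring. The natural route is to embed $G$ into a $\Delta$-regular graph $H$ on the same vertex set by adding, inside each part $V_i$, an $(n_i-n_k)$-regular graph, and then to $1$-factorize $H$; restricting that $1$-factorization to $G$ uses exactly $\Delta$ colors. Since $n_k\le N/k$ we have $\Delta=N-n_k\ge N(1-1/k)\ge N/2$, so $H$ is a dense regular graph of even order, where $1$-factorizability is available from high-degree $1$-factorization results.

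Then I would treat $N$ odd, which is where ``overfull'' carries content. A Class $1$ graph of odd order must have each color class a near-perfect matching ($\le(N-1)/2$ edges), so $\chi'=\Delta$ forces at most $\Delta(N-1)/2$ edges; ``not overfull'' says precisely $|E(G)|\le\Delta(N-1)/2$, leaving exactly the room to add within-part edges until the supergraph $H$ satisfies $\Delta(H)=\Delta$ and $|E(H)|=\Delta(N-1)/2$ (the per-vertex capacity $n_i-n_k$ never exceeds the within-part capacity $n_i-1$). I would then show such an $H$ decomposes into $\Delta$ near-perfect matchings, which restricts to the desired $\Delta$-coloring of $G$.

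The main obstacle is the parity bookkeeping hidden in the two ``regularize/pad'' steps: an $(n_i-n_k)$-regular graph on $n_i$ vertices exists only when $n_i(n_i-n_k)$ is even, and this can fail part by part even when $N$ is even, so a genuinely $\Delta$-regular supergraph need not exist and a purely embed-then-$1$-factorize argument breaks down. I expect the clean way around this to be amalgamation--detachment (of Nash--Williams/Hilton--Rodger type): amalgamate each part into a single vertex, obtaining a small multigraph for which a balanced proper edge-coloring with $\Delta$ colors is easy to exhibit, and then detach the amalgamated vertices back into independent sets while preserving both properness and equitability of the coloring. Checking that the detachment can be carried out with exactly $\Delta$ colors, and that in the odd-order case the deficient degrees distribute so that the color classes become near-perfect matchings, is the step that needs the most care.
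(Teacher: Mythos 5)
This statement is not proved in the paper at all: Theorem~\ref{mytheorem2} is quoted from Hoffman and Rodger \cite{b9}, so there is no in-paper argument to measure your proposal against, and it has to be judged on its own. The easy parts of your plan are fine: overfull $\Rightarrow$ Class~2 is immediate from the matching bound, and your computation that a complete multipartite graph of even order is never overfull is correct (indeed no graph of even order is overfull, since $|E(G)|=\tfrac12\sum_v\deg(v)\le\tfrac{N}{2}\Delta(G)$). The problem is that everything of substance --- ``not overfull implies $\chi'(G)=\Delta(G)$'' --- is deferred to steps that are not available as stated.

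Concretely, two gaps. First, for even $N$ you propose to embed $G$ in a $\Delta$-regular graph $H$ with $\Delta\ge N/2$ and invoke ``high-degree $1$-factorization results.'' The assertion that every $d$-regular graph of even order $N$ with $d\ge N/2$ (or even $d\ge 2\lceil N/4\rceil-1$) is $1$-factorizable is the $1$-factorization conjecture, which is a theorem only for sufficiently large $N$; it cannot be cited for all orders, and for $k=2$ your $\Delta$ sits exactly at the threshold. (You also correctly note that $H$ may fail to exist for parity reasons, which already sinks this route.) Second, your fallback via amalgamation--detachment is the right family of techniques, but the entire content of the theorem is concentrated in the step you leave unchecked: after amalgamating each part $V_i$ to a vertex $w_i$, you must exhibit a proper edge-coloring of the resulting multigraph (with multiplicities $n_in_j$) using exactly $\Delta$ colors in which every color meets $w_i$ with multiplicity at most $n_i$ and every color class has at most $\lfloor N/2\rfloor$ edges --- these are the hypotheses under which the Nash--Williams/Hilton detachment lemmas return a proper $\Delta$-edge-coloring of $G$. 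The ``not overfull'' hypothesis must be used precisely to build that outline coloring, and nothing in your sketch shows how; in the odd-order case this is exactly as hard as the near-$1$-factorization claim you also leave unproved. As it stands the proposal is a plausible research plan, not a proof.
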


An edge-coloring of a graph $G$ with colors $1,\ldots,t$ is an
interval $t$-coloring if all colors are used, and the colors of
edges incident to each vertex of $G$ are distinct and form an
interval of integers. A graph $G$ is interval colorable if $G$ has
an interval $t$-coloring for some positive integer $t$. For an
interval colorable graph $G$, the least and the greatest values of
$t$ for which $G$ has an interval $t$-coloring are denoted by $w(G)$
and $W(G)$, respectively. The concept of interval edge-coloring was
introduced by Asratian and Kamalian \cite{b1}. In \cite{b1,b2}, they
proved the following:

\begin{theorem}
\label{mytheorem3} If $G$ is a regular graph, then

\begin{description}
\item[(1)] $G$ is interval colorable if and only if $\chi^{\prime}(G)=\Delta(G)$.

\item[(2)] If $G$ is interval colorable and $w(G)\leq t\leq W(G)$, then $G$
has an interval $t$-coloring.
\end{description}
\end{theorem}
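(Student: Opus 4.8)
The plan is to handle the two parts separately, writing $r := \Delta(G)$ throughout and using the basic fact that in any interval coloring the colors incident to a vertex of degree $r$ are $r$ \emph{distinct} integers forming an interval, hence exactly $r$ consecutive integers. For part (1), the forward direction is the substantive one: suppose $G$ has an interval $t$-coloring $\alpha$. Since $G$ is $r$-regular, at each vertex $v$ the colors of the $r$ incident edges are a block $\{a_v, a_v+1, \ldots, a_v+r-1\}$ of $r$ consecutive integers. I would define $\beta(e) \equiv \alpha(e) \pmod r$, taking values in $\{1,\ldots,r\}$, and observe that a block of $r$ consecutive integers maps \emph{bijectively} onto all residues modulo $r$. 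Hence $\beta$ is a proper edge-coloring using only $r$ colors, which forces $\chi'(G) \le r$; together with $\chi'(G) \ge \Delta(G) = r$ this yields $\chi'(G) = \Delta(G)$.

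For the converse, if $\chi'(G) = \Delta(G) = r$, I take a proper $r$-edge-coloring; regularity forces all $r$ colors to occur at every vertex, so the colors at each vertex are exactly $\{1,\ldots,r\}$, an interval, and $G$ therefore has an interval $r$-coloring. The same construction simultaneously shows $w(G) = \Delta(G) = r$, since no proper coloring can use fewer than $\Delta(G)$ colors while an interval $r$-coloring has just been exhibited.

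For part (2), the heart of the matter is a single step-down lemma: \emph{if $G$ has an interval $s$-coloring with $s \ge r+1$, then it has an interval $(s-1)$-coloring.} Given such a coloring $\alpha$, every vertex incident to an edge of the maximal color $s$ has color-interval exactly $[s-r+1, s]$, because $s$ is the largest color present and the block has length $r$. The color class of $s$ is a matching, and the crucial inequality is $s-r \ge 1$: I recolor every edge of color $s$ to color $s-r$. At each endpoint the block $[s-r+1,s]$ becomes $[s-r,s-1]$, still an interval of length $r$, and no such endpoint previously carried color $s-r$ (its minimum was $s-r+1$), so the recoloring stays proper and no two recolored edges clash. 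All colors $1,\ldots,s-1$ remain in use while color $s$ disappears, producing an interval $(s-1)$-coloring.

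Finally I would iterate the lemma: starting from an interval $W(G)$-coloring and applying it as long as the current number of colors exceeds $r$, I obtain interval $t$-colorings for every $t$ with $r \le t \le W(G)$. Since $w(G) = r$, this covers the entire range $w(G) \le t \le W(G)$, completing the proof. I expect the step-down lemma to be the main point — specifically the verification that recoloring the top color class to $s-r$ remains both proper and interval at every affected vertex, which is exactly where regularity (forcing the uniform block $[s-r+1,s]$) is used; the remaining assembly is bookkeeping.
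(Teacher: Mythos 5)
The paper does not prove this statement at all: Theorem~\ref{mytheorem3} is quoted from Asratian and Kamalian \cite{b1,b2} as a known result, so there is no in-paper proof to compare against. Judged on its own, your proof is correct and complete, and it is essentially the classical argument. For (1), the reduction $\beta(e)\equiv\alpha(e)\pmod r$ works because the $r$ colors at each vertex of an $r$-regular graph form a block of $r$ consecutive integers and hence hit every residue class exactly once, giving a proper $r$-edge-coloring; the converse and the identity $w(G)=\Delta(G)$ follow as you say. For (2), the step-down lemma is sound: at every endpoint of an edge colored with the top color $s$ the palette is forced to be $[s-r+1,s]$, so moving the (matching) color class $s$ down to $s-r\geq 1$ turns each affected palette into $[s-r,s-1]$, preserves properness (no affected vertex previously saw $s-r$, and the recolored edges are pairwise nonadjacent), and leaves all colors $1,\ldots,s-1$ in use since only the class of $s$ was touched. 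Iterating from a $W(G)$-coloring down to $r=w(G)$ covers the whole range. The one point worth making explicit is that the argument uses regularity essentially in both places (the uniform block length $r$ at every vertex), which is exactly why the theorem is stated only for regular graphs; you do flag this, so nothing is missing.
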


In \cite{b10}, Kamalian investigated interval colorings of complete
bipartite graphs and trees. In particular, he proved the following:

\begin{theorem}
\label{mytheorem4} For any $r,s\in \mathbb{N}$, the complete
bipartite graph $K_{r,s}$ is interval colorable, and

\begin{description}
\item[(1)] $w\left(K_{r,s}\right)=r+s-\gcd(r,s)$,

\item[(2)] $W\left(K_{r,s}\right)=r+s-1$,

\item[(3)] if $w\left(K_{r,s}\right)\leq t\leq W\left(K_{r,s}\right)$, then $K_{r,s}$
has an interval $t$-coloring.
\end{description}
\end{theorem}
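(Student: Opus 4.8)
The plan is to establish the three parts by pairing an explicit construction with a matching extremal bound in each case; interval colorability then follows from the construction in part (2).

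For part (2), I would first exhibit the coloring $\alpha(a_ib_j)=i+j+1$, indexing the parts as $\{a_0,\dots,a_{r-1}\}$ and $\{b_0,\dots,b_{s-1}\}$. At $a_i$ the colors are $\{i+1,\dots,i+s\}$ and at $b_j$ they are $\{j+1,\dots,j+r\}$, so each is an interval of integers; the colors used run over $\{1,\dots,r+s-1\}$, giving an interval $(r+s-1)$-coloring (hence also interval colorability) and $W(K_{r,s})\ge r+s-1$. For the matching upper bound I would exploit completeness: in any interval $t$-coloring the edge of color $1$ has an $A$-endpoint $a_i$ with color set $[1,s]$, and the edge of color $t$ has a $B$-endpoint $b_{j'}$ with color set $[t-r+1,t]$; since $a_ib_{j'}$ is an edge, these two intervals must overlap, forcing $t\le r+s-1$.

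The heart of the argument is the lower bound in part (1). I would record the basic identity that, in any interval coloring, for every color $c$ the number $A(c)$ of $A$-vertices whose interval contains $c$ equals the number $B(c)$ of such $B$-vertices, since both count the matching of color $c$. Writing $P(x)=\sum_i x^{\lambda_i}$ and $Q(x)=\sum_j x^{\mu_j}$ for the left endpoints $\lambda_i,\mu_j$ of the vertex intervals, the identity $A\equiv B$ is equivalent to $(x^s-1)P(x)=(x^r-1)Q(x)$. Dividing by $x^{d}-1$ (with $d=\gcd(r,s)$) and using that $\tfrac{x^r-1}{x^d-1}$ and $\tfrac{x^s-1}{x^d-1}$ are coprime, as they share no cyclotomic factor $\Phi_k$ (such a $k$ would divide both $r$ and $s$ but not $d$), I would conclude $\tfrac{x^r-1}{x^d-1}\mid P(x)$, so $P(x)=\tfrac{x^r-1}{x^d-1}\,R(x)$ with $R\neq 0$. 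Comparing lowest exponents gives $\operatorname{val}(R)=1$ and comparing top degrees gives $\deg R=t-(r+s-d)+1$; since $\deg R\ge\operatorname{val}(R)=1$, this yields $t\ge r+s-\gcd(r,s)$, i.e. $w(K_{r,s})\ge r+s-\gcd(r,s)$. The reverse inequality I would get from a construction realizing equality: taking $R=d\,x$ forces the $A$-left-endpoints to be $1,1+d,\dots,1+(a-1)d$, each with multiplicity $d$ (where $r=da$), and symmetrically for $B$, and I would produce a proper edge-coloring realizing these intervals by a block/modular scheme.

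For part (3) I would produce, for each $t$ with $r+s-\gcd(r,s)\le t\le r+s-1$, an explicit interval $t$-coloring interpolating between the two extremal colorings above (equivalently, choosing $R$ of controlled degree $t-(r+s-d)+1$ and realizing the corresponding endpoint pattern). I expect the main obstacle to be twofold: the lower bound in part (1), where the $\gcd$ enters only through the cyclotomic coprimality and must be extracted cleanly, and the verification that the prescribed endpoint patterns in parts (1) and (3) are actually attained by genuine proper interval colorings, which is where the bulk of the combinatorial bookkeeping will lie.
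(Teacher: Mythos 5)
The paper does not prove Theorem~\ref{mytheorem4} at all: it is quoted from Kamalian \cite{b10}, so there is no in-paper proof to compare against, and your attempt has to stand on its own. Most of it does. The coloring $\alpha(a_ib_j)=i+j+1$ and the ``color $1$ forces $[1,s]$, color $t$ forces $[t-r+1,t]$, and these must meet'' argument correctly give interval colorability and part (2); this is the same construction as the paper's Lemma~\ref{mylemma} in the balanced case. Your lower bound for part (1) is also sound: $A(c)=B(c)$ because both count the color-$c$ matching, the identity $(x^s-1)P(x)=(x^r-1)Q(x)$ follows by summing the vertex intervals, the cyclotomic factorization shows $\tfrac{x^r-1}{x^d-1}$ and $\tfrac{x^s-1}{x^d-1}$ are coprime, and the valuation/degree comparison on $R$ (using $\min_i\lambda_i=1$ and $\max_i\lambda_i=t-s+1$) gives $t\ge r+s-d$. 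This algebraic route is a legitimate and clean way to extract the $\gcd$.

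The genuine gap is the realization step, and it is concentrated in part (3). For the upper bound in (1) your ``block/modular scheme'' does work (partition each side into blocks of size $d$ and color $(A_i,B_j)$ with a proper $d$-edge-coloring of $K_{d,d}$ on colors $[(i+j)d+1,(i+j)d+d]$), so I would accept that as a sketch. But for part (3) you only say you will ``interpolate'' by choosing $R$ of the right degree and ``realizing the corresponding endpoint pattern''; that last clause is the entire difficulty, and nothing you have written addresses it. There is no continuity theorem to fall back on: Theorem~\ref{mytheorem3}(2) applies only to regular graphs, and $K_{r,s}$ is irregular for $r\ne s$. Concretely, for $K_{6,9}$ and $t=13=6+9-2$ the uniform block construction is unavailable because $2\nmid 9$, so one must exhibit unequal-block or otherwise ad hoc colorings and then verify that a prescribed pair of left-endpoint multisets $(P,Q)$ satisfying $A(c)\equiv B(c)$ is actually attained by a proper edge-coloring --- i.e., that the color classes can be chosen as matchings partitioning $E(K_{r,s})$. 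Until that realization argument is supplied for every $t$ in the range, part (3) (and with it the exactness of $w$ in part (1) for the interpolated values) is not proved.
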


Later, Kamalian \cite{b11} obtained an upper bound on $W(G)$ for an
interval colorable graph $G$ depending on the number of vertices of
$G$.

\begin{theorem}
\label{mytheorem5} If $G$ is a connected interval colorable graph,
then $W(G)\leq 2\vert V(G)\vert -3$.
\end{theorem}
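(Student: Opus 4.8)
The plan is to fix an arbitrary interval $t$-coloring $\alpha$ of $G$ and bound $t$; writing $n=\vert V(G)\vert$, the goal is $t\le 2n-3$. For each vertex $v$ let $l(v)$ and $r(v)$ be the smallest and largest colors appearing on the edges incident to $v$. Since these colors are distinct and form an interval of integers, $r(v)-l(v)+1=\deg(v)\le \Delta(G)\le n-1$. Because colors $1$ and $t$ are both used, some vertex has $l(v)=1$ and some vertex has $r(v)=t$. The key move is to single out a vertex $b$ with $r(b)=t$ of \emph{maximum} degree among all top-color vertices. Then $t=l(b)+\deg(b)-1$, and since $\deg(b)\le n-1$ it suffices to establish the single inequality $l(b)\le n-1$. (The easy situation is when a vertex $a$ with $l(a)=1$ has $I_a=[1,\deg(a)]$ overlapping $I_b=[t-\deg(b)+1,t]$: overlap forces $t\le \deg(a)+\deg(b)-1\le 2n-3$ at once. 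The reduction above is the systematic version of this.)

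The engine for controlling $l(b)$ is a connectivity lemma: for every $i$ with $1\le i\le t-1$ there is a vertex whose color interval contains both $i$ and $i+1$. Indeed, otherwise every vertex would satisfy either $r(v)\le i$ (call it \emph{low}) or $l(v)\ge i+1$ (\emph{high}); but an edge joining a low vertex to a high vertex would have to carry an integer that is simultaneously $\le i$ and $\ge i+1$, which is impossible, so no edge joins the two classes. As colors $i$ and $i+1$ are each used, both classes are nonempty, contradicting the connectedness of $G$. This lemma lets me descend from $b$: at each color level below $b$'s interval there is always a vertex reaching strictly lower, so I can produce witnesses for every one of the $l(b)-1$ colors that lie below $l(b)$ (note all edges of those colors avoid $b$, hence live on $V(G)\setminus\{b\}$).

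The main obstacle is converting this descent into the \emph{sharp, linear} estimate $l(b)\le n-1$ rather than a weak or quadratic one. The naive route — covering $[1,t]$ by the intervals encountered and summing their lengths — fails, since a single high-degree vertex can span many consecutive colors, so the degrees multiply against the length of the descent and only yield a bound of order $n^{2}$. The crux is therefore an \emph{injective charging}: to each color below $b$'s interval I must assign a distinct vertex of $V(G)\setminus\{b\}$, the bridging lemma supplying a fresh witness at each level while one guards against "plateaus" where the walk stalls or revisits a vertex because the minimum color fails to drop. Carried out bluntly this gives only $l(b)-1\le n-1$, i.e.\ $l(b)\le n$ and $t\le 2n-2$; recovering the final unit is exactly where the extremal choice of $b$ (maximum degree among top-color vertices) must be exploited, for it is this choice that lets a second vertex — such as a top-color neighbor of $b$ carrying no color below $l(b)$ — be excluded from the charge, so that $l(b)-1\le n-2$. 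Making the assignment genuinely one-to-one and securing that last exclusion is the delicate heart of the argument; once it is in place, $l(b)\le n-1$ follows, and with $\deg(b)\le n-1$ we conclude $t=l(b)+\deg(b)-1\le 2n-3$.
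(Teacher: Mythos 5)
First, a remark on context: the paper does not prove this theorem at all — it is quoted from Kamalian's thesis \cite{b11} — so there is no in-paper argument to compare yours with, and your attempt has to stand on its own. The setup is fine as far as it goes: the identity $r(v)-l(v)+1=\deg(v)\le n-1$, the choice of a top-color vertex $b$ with $t=l(b)+\deg(b)-1$, and the bridging lemma (for each $i$ some vertex carries both $i$ and $i+1$, else $G$ would be disconnected) are all correct. But the proof stops exactly where the theorem begins. The entire content of the bound is the claim $l(b)\le n-1$ (indeed $l(b)-1\le n-2$), and you do not prove it; you only name it (``the delicate heart of the argument'') and describe what a proof would have to accomplish. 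That is a genuine gap, not a routine verification left to the reader.

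Moreover, the mechanism you propose for closing the gap cannot work as described. Applying the bridging lemma at level $i=l(w_j)-1$ produces one witness $w_{j+1}$ with $l(w_{j+1})\le l(w_j)-1$; the witnesses are pairwise distinct because $l$ strictly decreases along the descent, but each single witness covers the entire block of colors $[l(w_{j+1}),l(w_j)-1]$, so the natural map from colors in $[1,l(b)-1]$ to witnesses is many-to-one, and the descent only yields $l(b)-1\le\sum_j\deg(w_j)$ — precisely the quadratic bound you set out to avoid. Nor can an injective assignment of a distinct vertex to each color be obtained by a Hall-type argument: the colors in $[1,l(b)-1]$ lie on $l(b)-1$ pairwise distinct edges, and $m$ vertices can span $\binom{m}{2}$ edges, so for a set $C$ of colors the union of the vertex sets $\{V_c\colon c\in C\}$ need only have size of order $\sqrt{|C|}$, and Hall's condition for a system of distinct representatives can fail. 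Excluding such dense configurations requires genuinely new structural information about interval colorings — this is exactly what Kamalian's proof, and the sharpened bound $2|V(G)|-4$ of Giaro, Kubale and Malafiejski \cite{b8}, actually supply. Finally, be careful with the reduction itself: $l(b)\le n-1$ is sufficient but not necessary for $t\le 2n-3$ (when $\deg(b)<n-1$ the theorem tolerates $l(b)>n-1$), so you may be attempting to prove a statement strictly stronger than the theorem, and you give no argument that the extremal choice of $b$ makes that stronger statement true.
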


Clearly, this bound is sharp for the complete graph $K_{2}$, but if
$G\neq K_{2}$, then this upper bound can be improved to $2\left\vert
V(G)\right\vert -4$ \cite{b8}. For an $r$-regular graph $G$,
Kamalian and Petrosyan \cite{b12} showed that if $G$ with at least
$2r+2$ vertices admits an interval $t$-coloring, then $t\leq
2\left\vert V(G)\right\vert -5$. For a planar graph $G$, Axenovich
\cite{b3} showed that if $G$ has an interval $t$-coloring, then
$t\leq \frac{11}{6}\left\vert V(G)\right\vert$. In \cite{b15},
Petrosyan investigated interval colorings of complete graphs and
$n$-dimensional cubes. First note that $K_{2n+1}$ is not interval
colorable, but $K_{2n}$ is interval colorable and $w(K_{2n})=2n-1$
for any $n\in \mathbb{N}$. For $W(K_{2n})$, Petrosyan \cite{b15}
proved the following:

\begin{theorem}
\label{mytheorem6} If $n=p2^{q}$, where $p$ is odd and $q$ is
nonnegative, then $W\left(K_{2n}\right)\geq 4n-2-p-q$.
\end{theorem}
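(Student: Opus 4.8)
The plan is to prove the bound by induction on $q$, reducing the whole statement to one ``doubling'' construction together with an explicit base case. Writing $n=p2^{q}$, the engine of the induction will be the estimate that for every \emph{even} $m$,
\[
W(K_{2m})\ge W(K_{m})+2m-1 ,
\]
and more precisely that any interval $s$-coloring of $K_{m}$ can be extended to an interval $(s+2m-1)$-coloring of $K_{2m}$. Together with the base case $W(K_{2p})\ge 3p-2$ for odd $p$ this yields the theorem: the base case is precisely $4p-2-p-0$, and when $q\ge 1$ the number $n=p2^{q}$ is even, so applying the doubling estimate with $m=n$ and the induction hypothesis to $K_{n}=K_{2\cdot p2^{q-1}}$ (which gives $W(K_{n})\ge 2n-1-p-q$) produces $W(K_{2n})\ge (2n-1-p-q)+(2n-1)=4n-2-p-q$. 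Thus the ``$-1$'' lost at each doubling is exactly what accumulates into the ``$-q$'', while the deficit $p$ of the base case accounts for the ``$-p$''.

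For the doubling construction I would split $V(K_{2m})$ into two halves $A,B$ of size $m$. The edges inside $A$ are colored by a fixed interval $s$-coloring of $K_{m}$ with colors $1,\dots,s$, and the edges inside $B$ by the same coloring shifted up by $2m-1$, using colors $2m,\dots,s+2m-1$. Then each $a\in A$ sees inside $A$ an interval of length $m-1$ contained in $[1,s]$, and each $b\in B$ sees inside $B$ an interval of length $m-1$ contained in $[2m,s+2m-1]$. It remains to color the connecting complete bipartite graph $K_{m,m}$ so that the $m$ bipartite colors at each $a\in A$ extend its internal interval \emph{upward} to a single interval of length $2m-1$, and the $m$ bipartite colors at each $b\in B$ extend its internal interval \emph{downward}; these colors occupy the middle band and in particular cover the gap $s+1,\dots,2m-1$, so that all of $1,\dots,s+2m-1$ are used and the degree $2m-1$ at every vertex is matched exactly.

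The base case needs a genuinely different argument, because for odd $p$ the graph $K_{p}$ is \emph{not} interval colorable (by Theorem~\ref{mytheorem3}(1), since $\chi'(K_{p})=p>p-1=\Delta(K_{p})$), so the halves of $K_{2p}$ cannot themselves be colored with intervals and the doubling scheme does not apply. Here I would instead construct an interval $(3p-2)$-coloring of $K_{2p}$ directly: starting from a $1$-factorization of $K_{2p}$ into $2p-1$ perfect matchings and relabeling/spreading the matchings over the colors $1,\dots,3p-2$ so that, although no single vertex meets all colors, every vertex still meets a block of $2p-1$ consecutive colors.

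The main obstacle is the bipartite alignment in the doubling step: one must actually exhibit a proper edge-coloring of $K_{m,m}$ realizing the prescribed per-vertex interval palettes (an interval/list edge-coloring of a complete bipartite graph, in the spirit of Theorem~\ref{mytheorem4}), and, since the vertices of $A$ generally have \emph{different} internal intervals once $s>m-1$, these palettes are not uniform and the completion is a nontrivial Latin-square-type problem. To push the construction through all levels of the induction I would most likely carry an extra invariant describing the ``interval profile'' of the coloring produced at each stage, so that the bipartite completion at the next doubling is guaranteed to exist; verifying that invariant, and the explicit base-case coloring, are the two computational hearts of the proof.
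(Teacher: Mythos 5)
First, note that this paper does not prove Theorem~\ref{mytheorem6} at all: it is quoted from \cite{b15}, so there is no in-paper proof to compare against; I can only judge your plan on its merits. Your induction skeleton (doubling $K_{2m}=2K_m+K_{m,m}$, losing one color per doubling, with an odd base case $W(K_{2p})\ge 3p-2$) has the right shape and the arithmetic closes, and it is consistent in spirit with the recursive construction of \cite{b15}. But both of the steps you yourself flag as the ``computational hearts'' are genuinely missing, and one of them fails in the generality you state it. The doubling inequality $W(K_{2m})\ge W(K_m)+2m-1$ starting from an \emph{arbitrary} interval $s$-coloring $\alpha$ of $K_m$ is not established: after placing $\alpha$ on $A$ and $\alpha\oplus(2m-1)$ on $B$, the vertex $a_i$ (resp.\ $b_i$) whose internal spectrum starts at $l_i$ must receive exactly the bipartite palette $[l_i+m-1,\,l_i+2m-2]$, so you need a proper edge-coloring of $K_{m,m}$ realizing a prescribed multiset of interval palettes on each side. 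Lemma~\ref{mylemma} only delivers the case where the left endpoints are $1,2,\dots,m$, each occurring once; but that forces $s=2m-2$, whereas your induction hypothesis supplies $s=2m-1-p-q<2m-2$ in general, so the left endpoints necessarily repeat and the completion is not covered by anything you invoke. The unspecified ``interval profile invariant'' is precisely where the proof lives; without stating it and verifying that it propagates through each doubling, the induction does not close.

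Second, the base case is not merely unproved: the sketch cannot work as written. In a $1$-factorization of $K_{2p}$ every vertex meets every perfect matching, so relabeling or spreading whole matchings over the colors $1,\dots,3p-2$ gives every vertex the identical palette of size $2p-1$; to use $3p-2$ colors you must split individual matchings across several colors, at which point the $1$-factorization framing buys nothing and an explicit construction is required (and, as you correctly observe, for odd $p$ it cannot come from the doubling scheme, since $K_p$ is not interval colorable). In short, the strategy is plausible, but as submitted it is a plan rather than a proof: the two load-bearing constructions are absent, and the doubling lemma needs a stronger hypothesis than ``$K_m$ has some interval $s$-coloring.''
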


In this paper we investigate interval colorings of complete balanced
$k$-partite graphs. In particular, we generalize Theorem
\ref{mytheorem6} for complete balanced $k$-partite graphs.
Also, we discuss some other corollaries of our result.\\

\section{Results}\

If $\alpha $ is a proper edge-coloring of $G$ and $v\in V(G)$, then
$S\left(v,\alpha \right)$ denotes the set of colors appearing on
edges incident to $v$.

Let $[t]$ denote the set of the first $t$ natural numbers. Let
$\left\lfloor a\right\rfloor $ denote the largest integer less than
or equal to $a$. For two positive integers $a$ and $b$ with $a\leq
b$, the set $\left\{a,\ldots,b\right\}$ is denoted by $[a,b]$ and
called an interval. For an interval $[a,b]$ and a nonnegative number
$p$, the notation $[a,b]\oplus p$ means: $[a+p,b+p]$.\\

We also need the following lemma.

\begin{lemma}
\label{mylemma} If $K_{n,n}$ is a complete balanced bipartite graph
with a bipartition $(U,V)$, where $U=\{u_{1},\ldots,u_{n}\}$ and
$V=\{v_{1},\ldots,v_{n}\}$, then $K_{n,n}$ has an interval
$(2n-1)$-coloring $\alpha$ such that
$S(u_{i},\alpha)=S(v_{i},\alpha)=[i,i+n-1]$ for $1\leq i\leq n$
\end{lemma}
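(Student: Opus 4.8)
The plan is to construct the coloring $\alpha$ explicitly by a modular formula. I would label the colors so that the edge $u_i v_j$ receives a color depending on $i+j$; the natural guess is
\[
\alpha(u_i v_j) = i + j - 1,
\]
so that the color lies in the range $[1, 2n-1]$. First I would verify that this is a proper edge-coloring: two edges incident to $u_i$, say $u_i v_j$ and $u_i v_{j'}$ with $j \neq j'$, receive colors $i+j-1$ and $i+j'-1$, which differ; the analogous check at each $v_j$ is symmetric. Next I would check that all $2n-1$ colors are actually used (color $c$ appears on $u_i v_j$ whenever $i+j = c+1$, which is solvable for every $c \in [1,2n-1]$).

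The central claim is the prescribed color set at each vertex. For a fixed $u_i$, as $j$ ranges over $[1,n]$ the color $i+j-1$ ranges over the consecutive integers $i, i+1, \ldots, i+n-1$, so $S(u_i,\alpha) = [i, i+n-1]$ exactly as required. By the symmetry of the formula in $i$ and $j$, the same computation at $v_i$ gives $S(v_i,\alpha) = [i, i+n-1]$ as well. Since each such set is a block of $n$ consecutive integers, the coloring is automatically interval at every vertex, and the spectrum of used colors is $[1,2n-1]$, so $\alpha$ is a genuine interval $(2n-1)$-coloring.

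I do not expect a serious obstacle here: once the formula $\alpha(u_i v_j) = i+j-1$ is written down, propriety, surjectivity onto the color set, and the exact vertex color sets all follow from the single observation that $i+j-1$ is strictly monotone in each variable separately and sweeps out an interval of length $n$ as one index runs through $[1,n]$. The only point to be careful about is bookkeeping at the endpoints, namely confirming that the minimum color $1$ (at $i=j=1$) and the maximum color $2n-1$ (at $i=j=n$) are both attained, so that the coloring uses precisely the colors $1,\ldots,2n-1$ and no more. Given the clean symmetric form, the verification is entirely routine.
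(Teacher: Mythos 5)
Your proposal is correct and uses exactly the same construction as the paper, namely $\alpha(u_i v_j)=i+j-1$; the paper simply states that the interval property and the vertex color sets are then clear, whereas you spell out the routine verification. No issues.
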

\begin{proof}
Let $(U,V)$ be a bipartition of $K_{n,n}$, where
$U=\{u_{1},\ldots,u_{n}\}$ and $V=\{v_{1},\ldots,v_{n}\}$. Define a
coloring $\alpha$ of the edges of $K_{n,n}$ as follows: for each
edge $u_{i}v_{j}\in E(K_{n,n})$, let $\alpha(u_{i}v_{j})=i+j-1$,
where $1\leq i\leq n, 1\leq j\leq n$. Clearly $\alpha$ is an
interval $(2n-1)$-coloring of $K_{n,n}$ and
$S(u_{i},\alpha)=S(v_{i},\alpha)=[i,i+n-1]$ for $1\leq i\leq
n$.~$\square$
\end{proof}

Let $G$ be a complete balanced $k$-partite graph with $n$ vertices
in each part. By Theorems \ref{mytheorem1} and \ref{mytheorem3}, we
have that $G$ is interval colorable if and only if $nk$ is even.
Moreover, if $nk$ is even, then $w(G)=\Delta(G)=(k-1)n$. On the
other hand, by Theorem \ref{mytheorem5}, we obtain $W(G)\leq 2nk-3$
whenever $nk$ is even. Now we derive a lower bound for $W(G)$.

\begin{theorem}
\label{mytheorem7} If $G$ is a complete balanced $k$-partite graph
with $n$ vertices in each part and $nk$ is even, then $W(G)\geq
\left(\frac{3}{2}k-1\right)n-1$.
\end{theorem}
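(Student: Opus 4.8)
The plan is to exhibit a single interval $t$-coloring of $G$ with $t=\left(\tfrac32 k-1\right)n-1$, assembled from shifted copies of the bipartite coloring of Lemma~\ref{mylemma}. Write the parts as $V_1,\dots,V_k$ with $V_l=\{v^l_1,\dots,v^l_n\}$, and note that $G$ is the edge-disjoint union of the $\binom{k}{2}$ complete bipartite graphs $G_{l,m}$ ($l<m$) joining $V_l$ to $V_m$, each a copy of $K_{n,n}$. On $G_{l,m}$ I would place the coloring of Lemma~\ref{mylemma} increased by a nonnegative shift $p_{l,m}$; since that coloring satisfies $S(u_i,\alpha)=S(v_i,\alpha)=[i,i+n-1]$, the shifted copy contributes exactly $[\,i+p_{l,m},\,i+p_{l,m}+n-1\,]$ to the $i$-th vertex of \emph{both} $V_l$ and $V_m$. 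Hence $v^l_i$ sees $\bigcup_{m\neq l}[\,i+p_{l,m},\,i+p_{l,m}+n-1\,]$, and for this to be proper (pairwise disjoint blocks) yet form one interval the shifts at part $l$ must be $\{c_l,c_l+n,\dots,c_l+(k-2)n\}$, so that $v^l_i$ gets the length-$\Delta(G)=(k-1)n$ interval $[\,i+c_l,\,i+c_l+(k-1)n-1\,]$. Assigning such shifts is precisely an interval coloring of $K_k$ (parts as vertices, shifts as scaled colors); this succeeds only when $K_k$ is interval colorable, i.e.\ $k$ even, since on a triangle three shifts cannot be pairwise exactly $n$ apart.

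To organize this and to handle the shift-size, I would use the following lifting fact: if $H$ has an interval $s$-coloring and $G$ is the $b$-fold blow-up of $H$ (each vertex replaced by $b$ independent vertices, each edge by $K_{b,b}$), then coloring the $K_{b,b}$ attached to an $H$-edge of color $c$ by Lemma~\ref{mylemma} shifted by $(c-1)b$ yields an interval coloring of $G$ whose largest color is $(s+1)b-1$. Now $G$ is the blow-up of two natural reduced graphs. If $k$ is even I take $H=K_k$ and $b=n$; then $(s+1)n-1=T$ forces $s=\tfrac{3k-4}{2}$, i.e.\ an interval $(3m-2)$-coloring of $K_{2m}$. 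If $k$ is odd then $nk$ even forces $n$ even, so I instead take $H=K_{k\times 2}$ (the cocktail-party graph, complete balanced $k$-partite with two vertices per part, whose $(n/2)$-fold blow-up is exactly $G$) and $b=n/2$; here $(s+1)(n/2)-1=T$ forces $s=3k-3$. This reduced graph is regular and, by Theorem~\ref{mytheorem2}, not overfull, hence Class~$1$ and interval colorable by Theorem~\ref{mytheorem3}, whereas $K_k$ for odd $k$ is Class~$2$ and fails; the even order needed to repair this parity defect is supplied precisely by $n$ being even.

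The whole difficulty is therefore concentrated in constructing the two \emph{stretched} base colorings: an interval $(3m-2)$-coloring of $K_{2m}$ and an interval $(3k-3)$-coloring of $K_{k\times 2}$, each using about half again as many colors as the trivial $\Delta$-coloring coming from a $1$-factorization (which leaves every vertex with the same interval and wastes the range). The idea I would pursue is to spread the interval offsets across the $|V(H)|/2$-many starting positions so that the extreme colors $1$ and $s$ are each carried by a single edge whose endpoints hold the extreme offsets, while keeping every intermediate color present and every vertex's spectrum a full block; the octahedron $K_{3\times2}$ already shows this is achievable (an explicit interval $6$-coloring exists, but a uniform per-part offset does not suffice and the offsets must be tuned edge-by-edge). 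The main obstacle is exactly this combinatorial core — producing these offset patterns in general and verifying simultaneously that the coloring is proper and that each vertex sees a complete interval of length $\Delta$ — and, within it, the odd case, where the triangle obstruction above rules out the uniform block-shift and one must route everything through $K_{k\times2}$ using the hypothesis that $n$ is even.
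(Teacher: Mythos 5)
Your reduction is exactly the one the paper uses: decompose $G$ into the $\binom{k}{2}$ copies of $K_{n,n}$ joining pairs of parts, color each by Lemma~\ref{mylemma} shifted by $(c-1)b$ where $c$ is the color of the corresponding edge in an interval coloring of a reduced graph $H$, and take $H=K_k$ with $b=n$ when $k$ is even, and $H=K_{2k}-F$ (the cocktail-party graph, $F$ a perfect matching) with $b=n/2$ when $n$ is even. Your lifting computation and your identification of the two required base colorings --- an interval $(3m-2)$-coloring of $K_{2m}$ for $k=2m$, and an interval $(3k-3)$-coloring of $K_{2k}-F$ --- are correct, as is the observation that the triangle obstruction forces the detour through the cocktail-party graph when $k$ is odd.

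However, you stop precisely where the proof begins: you never construct either base coloring, and you say yourself that this is ``the main obstacle'' and ``the whole difficulty.'' That is a genuine gap, not a deferred routine verification. The paper's entire proof of Theorem~\ref{mytheorem7} consists of writing these two colorings down explicitly (eight separate formula branches for the shift pattern in each case) and then checking that every vertex's spectrum is an interval of length $(k-1)n$ and that every color in $\left[\left(\frac{3}{2}k-1\right)n-1\right]$ occurs. For the even-$k$ case you could in fact have closed the gap without a new construction: Theorem~\ref{mytheorem6} gives $W(K_{2m})\geq 4m-2-p-q\geq 3m-2$ (writing $m=p2^{q}$ and using $p2^{q}\geq p+q$), and Theorem~\ref{mytheorem3}(2) then yields an interval $(3m-2)$-coloring of $K_{2m}$ since $w(K_{2m})=2m-1\leq 3m-2$; you did not make this argument. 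For the odd-$k$ case there is no such shortcut: no cited result supplies an interval $(3k-3)$-coloring of $K_{2k}-F$ --- knowing it is Class~1 only gives $(2k-2)$-colorings, and invoking the theorem being proved with $n=2$ would be circular --- so an explicit offset pattern like the paper's is unavoidable, and your proposal does not provide one.
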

\begin{proof} We distinguish our proof into two cases.

Case 1: $k$ is even.

Let $V(G)=\left\{v^{(i)}_{j}\colon\,1\leq i\leq k,1\leq j\leq
n\right\}$ and

$E(G)=\left\{v^{(i)}_{p}v^{(j)}_{q}\colon\,1\leq i<j\leq k,1\leq
p\leq n,1\leq q\leq n\right\}$.

Define an edge-coloring $\alpha$ of the graph $G$.\\

For each edge $v^{(i)}_{p}v^{(j)}_{q}\in E(G)$ with $1\leq i<j\leq
k$ and $p=1,\ldots,n$, $q=1,\ldots,n$, define a
color $\alpha\left(v^{(i)}_{p}v^{(j)}_{q}\right)$ as follows:\\

for $i=1,\ldots,\left\lfloor\frac{k}{4}\right\rfloor$, $
j=2,\ldots,\frac{k}{2}$, $i+j\leq \frac{k}{2}+1$, let

\begin{center}
$\alpha \left(v_{p}^{(i)}v_{q}^{(j)}\right)=\left(
i+j-3\right)n+p+q-1$;\\
\end{center}

for $i=2,\ldots,\frac{k}{2}-1$,
$j=\left\lfloor\frac{k}{4}\right\rfloor+2,\ldots,\frac{k}{2}$,
$i+j\geq \frac{k}{2}+2$, let

\begin{center}
$\alpha \left(v_{p}^{(i)}v_{q}^{(j)}\right)=\left(
i+j+\frac{k}{2}-4\right)n+p+q-1$;\\
\end{center}

for $i=3,\ldots,\frac{k}{2}$, $j=\frac{k}{2}+1,\dots,k-2$, $j-i\leq
\frac{k}{2}-2$, let

\begin{center}
$\alpha \left(v_{p}^{(i)}v_{q}^{(j)}\right)=\left(
\frac{k}{2}+j-i-1\right)n+p+q-1$;\\
\end{center}

for $i=1,\ldots,\frac{k}{2}$, $j=\frac{k}{2}+1,\ldots,k$, $j-i\geq
\frac{k}{2}$, let

\begin{center}
$\alpha
\left(v_{p}^{(i)}v_{q}^{(j)}\right)=\left(j-i-1\right)n+p+q-1$;\\
\end{center}

for $i=2,\ldots,1+\left\lfloor \frac{k-2}{4}\right\rfloor$,
$j=\frac{k}{2}+1,\dots,\frac{k}{2}+\left\lfloor\frac{k-2}{4}\right\rfloor
$, $j-i=\frac{k}{2}-1$, let

\begin{center}
$\alpha
\left(v_{p}^{(i)}v_{q}^{(j)}\right)=\left(2i-3\right)n+p+q-1$;\\
\end{center}

for $i=\left\lfloor
\frac{k-2}{4}\right\rfloor+2,\ldots,\frac{k}{2}$,
$j=\frac{k}{2}+1+\left\lfloor\frac{k-2}{4}\right\rfloor,\ldots,k-1$,
$j-i=\frac{k}{2}-1$, let

\begin{center}
$\alpha
\left(v_{p}^{(i)}v_{q}^{(j)}\right)=\left(i+j-3\right)n+p+q-1$;\\
\end{center}

for $i=\frac{k}{2}+1,\dots,\frac{k}{2}+\left\lfloor
\frac{k}{4}\right\rfloor-1$, $j=\frac{k}{2}+2,\ldots,k-2$, $i+j\leq
\frac{3}{2}k-1$, let

\begin{center}
$\alpha
\left(v_{p}^{(i)}v_{q}^{(j)}\right)=\left(i+j-k-1\right)n+p+q-1$;\\
\end{center}

for $i=\frac{k}{2}+1,\ldots,k-1$, $j=\frac{k}{2}+\left\lfloor
\frac{k}{4}\right\rfloor +1,\dots,k$, $i+j\geq \frac{3}{2}k$, let

\begin{center}
$\alpha \left(v_{p}^{(i)}v_{q}^{(j)}\right)=\left(
i+j-\frac{k}{2}-2\right)n+p+q-1$.\\
\end{center}

Let us prove that $\alpha$ is an interval
$\left(\left(\frac{3}{2}k-1\right)n-1\right)$-coloring of the graph
$G$.

First we show that for each $t\in
\left[\left(\frac{3}{2}k-1\right)n-1\right]$, there is an edge $e\in
E(G)$ with $\alpha(e)=t$.

Consider the vertices
$v_{1}^{(1)},\ldots,v_{n}^{(1)},v_{1}^{(k)},\ldots,v_{n}^{(k)}$.
Now, by Lemma \ref{mylemma} and the definition of $\alpha$, for
$1\leq j\leq n$,

\begin{center}
$S\left(v_{j}^{(1)},\alpha\right)
=\bigcup_{l=1}^{k-1}\left([j,j+n-1]\oplus (l-1)n\right)$ and
$S\left(v_{j}^{(k)},\alpha\right)
=\bigcup_{l=\frac{k}{2}}^{\frac{3}{2}k-2}\left([j,j+n-1]\oplus
(l-1)n\right)$.
\end{center}

Let $\overline{C}$ and $\overline{\overline{C}}$ be the subsets of
colors appear on edges incident to the vertices
$v_{1}^{(1)},\ldots,v_{n}^{(1)}$ and
$v_{1}^{(k)},\ldots,v_{n}^{(k)}$ in the coloring $\alpha$,
respectively, that is:

\begin{center}
$\overline{C}=\bigcup_{j=1}^{n} S\left(v_{j}^{(1)},\alpha\right)$
and $\overline{\overline{C}}=\bigcup_{j=1}^{n}
S\left(v_{j}^{(k)},\alpha\right)$.
\end{center}

It is straightforward to check that $\overline{C}\cup
\overline{\overline{C}}=\left[\left(\frac{3}{2}k-1\right)n-1\right]$,
so for each $t\in \left[\left(\frac{3}{2}k-1\right)n-1\right]$,
there is an edge $e\in E(G)$ with $\alpha(e)=t$.

Next we show that the edges incident to any vertex of $G$ are
colored by $(k-1)n$ consecutive colors.

Let $v_{j}^{(i)}\in V(G)$, where $1\leq i\leq k$, $1\leq j\leq n$.

Subcase 1.1. $1\leq i\leq 2$, $1\leq j\leq n$.

By Lemma \ref{mylemma} and the definition of $\alpha$, we have

\begin{center}
$S\left(v_{j}^{(1)},\alpha\right)=S\left(v_{j}^{(2)},\alpha\right)
=\bigcup_{l=1}^{k-1}\left([j,j+n-1]\oplus
(l-1)n\right)=\left[j,j+(k-1)n-1\right]$.
\end{center}

Subcase 1.2. $3\leq i\leq \frac{k}{2}$, $1\leq j\leq n$.

By Lemma \ref{mylemma} and the definition of $\alpha$, we have

\begin{center}
$S\left(v_{j}^{(i)},\alpha\right)
=\bigcup_{l=i-1}^{k-3+i}\left([j,j+n-1]\oplus
(l-1)n\right)=\left[j+(i-2)n,j+(k-3+i)n-1\right]$.
\end{center}

Subcase 1.3. $\frac{k}{2}+1\leq i\leq k-2$, $1\leq j\leq n$.

By Lemma \ref{mylemma} and the definition of $\alpha$, we have

\begin{center}
$S\left(v_{j}^{(i)},\alpha\right)
=\bigcup_{l=i-\frac{k}{2}+1}^{\frac{k}{2}-1+i}\left([j,j+n-1]\oplus
(l-1)n\right)=\left[j+\left(i-\frac{k}{2}\right)n,j+\left(\frac{k}{2}-1+i\right)n-1\right]$.
\end{center}

Subcase 1.4. $k-1\leq i\leq k,1\leq j\leq n$.

By Lemma \ref{mylemma} and the definition of $\alpha$, we have

\begin{center}
$S\left(v_{j}^{(k-1)},\alpha\right)=S\left(v_{j}^{(k)},\alpha\right)
=\bigcup_{l=\frac{k}{2}}^{\frac{3}{2}k-2}\left([j,j+n-1]\oplus
(l-1)n\right)
=\left[j+\left(\frac{k}{2}-1\right)n,j+\left(\frac{3k}{2}-2\right)n-1\right]$.
\end{center}

This shows that $\alpha$ is an interval
$\left(\left(\frac{3}{2}k-1\right)n-1\right)$-coloring of $G$; thus
$W(G)\geq \left(\frac{3}{2}k-1\right)n-1$ for even $k\geq 2$.

Case 2: $n$ is even.

Let $n=2m$, $m\in \mathbb{N}$. Let
$U_{i}=\left\{v_{1}^{(i)},\ldots,v_{m}^{(i)},v_{1}^{(k+i)},\ldots,v_{m}^{(k+i)}\right\}$
($1\leq i\leq k$) be the $k$ independent sets of vertices of $G$.
For $i=1,\ldots,2k$, define the set $V_{i}$ as follows:
$V_{i}=\left\{v_{1}^{(i)},\ldots,v_{m}^{(i)}\right\}$. Clearly,
$V(G)=\bigcup_{i=1}^{2k}V_{i}$. For $1\leq i<j\leq 2k$, define
$(V_{i},V_{j})$ as the set of all edges between $V_{i}$ and $V_{j}$.
It is easy to see that for $1\leq i<j\leq 2k$, $\left\vert
(V_{i},V_{j})\right\vert=m^{2}$ except for $\left\vert
(V_{i},V_{k+i})\right\vert=0$ whenever $i=1,\ldots,k$. If we
consider the sets $V_{i}$ as the vertices and the sets
$(V_{i},V_{j})$ as the edges, then we obtain that $G$ is isomorphic
to the graph $K_{2k}-F$, where $F$ is a perfect matching. Now we
define an edge-coloring $\beta$ of the graph $G$.

For each edge $v^{(i)}_{p}v^{(j)}_{q}\in E(G)$ with $1\leq i<j\leq
2k$ and $p=1,\ldots,m$, $q=1,\ldots,m$, define a
color $\beta\left(v^{(i)}_{p}v^{(j)}_{q}\right)$ as follows:\\

for $i=1,\ldots,\left\lfloor\frac{k}{2}\right\rfloor$, $
j=2,\ldots,k$, $i+j\leq k+1$, let

\begin{center}
$\beta \left(v_{p}^{(i)}v_{q}^{(j)}\right)=\left(
i+j-3\right)m+p+q-1$;\\
\end{center}

for $i=2,\ldots,k-1$,
$j=\left\lfloor\frac{k}{2}\right\rfloor+2,\ldots,k$, $i+j\geq k+2$,
let

\begin{center}
$\beta\left(v_{p}^{(i)}v_{q}^{(j)}\right)=\left(
i+j+k-5\right)m+p+q-1$;\\
\end{center}

for $i=3,\ldots,k$, $j=k+1,\dots,2k-2$, $j-i\leq k-2$, let

\begin{center}
$\beta\left(v_{p}^{(i)}v_{q}^{(j)}\right)=\left(
k+j-i-2\right)m+p+q-1$;\\
\end{center}

for $i=1,\ldots,k-1$, $j=k+2,\ldots,2k$, $j-i\geq k+1$, let

\begin{center}
$\beta
\left(v_{p}^{(i)}v_{q}^{(j)}\right)=\left(j-i-2\right)m+p+q-1$;\\
\end{center}

for $i=2,\ldots,1+\left\lfloor \frac{k-1}{2}\right\rfloor$,
$j=k+1,\dots,k+\left\lfloor\frac{k-1}{2}\right\rfloor $, $j-i=k-1$,
let

\begin{center}
$\beta
\left(v_{p}^{(i)}v_{q}^{(j)}\right)=\left(2i-3\right)m+p+q-1$;\\
\end{center}

for $i=\left\lfloor \frac{k-1}{2}\right\rfloor+2,\ldots,k$,
$j=k+1+\left\lfloor\frac{k-1}{2}\right\rfloor,\ldots,2k-1$,
$j-i=k-1$, let

\begin{center}
$\beta
\left(v_{p}^{(i)}v_{q}^{(j)}\right)=\left(i+j-4\right)m+p+q-1$;\\
\end{center}

for $i=k+1,\dots,k+\left\lfloor\frac{k}{2}\right\rfloor-1$,
$j=k+2,\ldots,2k-2$, $i+j\leq 3k-1$, let

\begin{center}
$\beta
\left(v_{p}^{(i)}v_{q}^{(j)}\right)=\left(i+j-2k-1\right)m+p+q-1$;\\
\end{center}

for $i=k+1,\ldots,2k-1$, $j=k+\left\lfloor \frac{k}{2}\right\rfloor
+1,\dots,2k$, $i+j\geq 3k$, let

\begin{center}
$\beta \left(v_{p}^{(i)}v_{q}^{(j)}\right)=\left(
i+j-k-3\right)m+p+q-1$.\\
\end{center}

Let us prove that $\beta$ is an interval
$\left(\left(\frac{3}{2}k-1\right)n-1\right)$-coloring of the graph
$G$.

First we show that for each $t\in
\left[\left(\frac{3}{2}k-1\right)n-1\right]$, there is an edge $e\in
E(G)$ with $\beta(e)=t$.

Consider the vertices
$v_{1}^{(1)},\ldots,v_{m}^{(1)},v_{1}^{(2k)},\ldots,v_{m}^{(2k)}$.
Now, by Lemma \ref{mylemma} and the definition of $\beta$, for
$1\leq j\leq m$,

\begin{center}
$S\left(v_{j}^{(1)},\beta\right)
=\bigcup_{l=1}^{2k-2}\left([j,j+m-1]\oplus (l-1)m\right)$ and
$S\left(v_{j}^{(2k)},\beta\right)
=\bigcup_{l=k}^{3k-3}\left([j,j+m-1]\oplus (l-1)m\right)$.
\end{center}

Let $\tilde{C}$ and $\tilde{\tilde{C}}$ be the subsets of colors
appear on edges incident to the vertices
$v_{1}^{(1)},\ldots,v_{m}^{(1)}$ and
$v_{1}^{(2k)},\ldots,v_{m}^{(2k)}$ in the coloring $\beta$,
respectively, that is:

\begin{center}
$\tilde{C}=\bigcup_{j=1}^{m} S\left(v_{j}^{(1)},\beta\right)$ and
$\tilde{\tilde{C}}=\bigcup_{j=1}^{m}
S\left(v_{j}^{(2k)},\beta\right)$.
\end{center}

It is straightforward to check that $\tilde{C}\cup
\tilde{\tilde{C}}=\left[\left(\frac{3}{2}k-1\right)n-1\right]$, so
for each $t\in \left[\left(\frac{3}{2}k-1\right)n-1\right]$, there
is an edge $e\in E(G)$ with $\beta(e)=t$.

Next we show that the edges incident to any vertex of $G$ are
colored by $(k-1)n$ consecutive colors.

Let $v_{j}^{(i)}\in V(G)$, where $1\leq i\leq 2k$, $1\leq j\leq m$.

Subcase 2.1. $1\leq i\leq 2$, $1\leq j\leq m$.

By Lemma \ref{mylemma} and the definition of $\beta$, we have

\begin{center}
$S\left(v_{j}^{(1)},\beta\right)=S\left(v_{j}^{(2)},\beta\right)
=\bigcup_{l=1}^{2k-2}\left([j,j+m-1]\oplus
(l-1)m\right)=\left[j,j+(2k-2)m-1\right]$.
\end{center}

Subcase 2.2. $3\leq i\leq k$, $1\leq j\leq m$.

By Lemma \ref{mylemma} and the definition of $\beta$, we have

\begin{center}
$S\left(v_{j}^{(i)},\beta\right)
=\bigcup_{l=i-1}^{2k-4+i}\left([j,j+m-1]\oplus
(l-1)m\right)=\left[j+(i-2)m,j+(2k-4+i)m-1\right]$.
\end{center}

Subcase 2.3. $k+1\leq i\leq 2k-2$, $1\leq j\leq m$.

By Lemma \ref{mylemma} and the definition of $\beta$, we have

\begin{center}
$S\left(v_{j}^{(i)},\beta\right)
=\bigcup_{l=i-k+1}^{k-2+i}\left([j,j+m-1]\oplus
(l-1)m\right)=\left[j+(i-k)m,j+(k-2+i)m-1\right]$.
\end{center}

Subcase 2.4. $2k-1\leq i\leq 2k,1\leq j\leq m$.

By Lemma \ref{mylemma} and the definition of $\beta$, we have

\begin{center}
$S\left(v_{j}^{(2k-1)},\beta\right)=S\left(v_{j}^{(2k)},\beta\right)
=\bigcup_{l=k}^{3k-3}\left([j,j+m-1]\oplus (l-1)m\right)
=\left[j+(k-1)m,j+(3k-3)m-1\right]$.
\end{center}

This shows that $\beta$ is an interval
$\left(\left(\frac{3}{2}k-1\right)n-1\right)$-coloring of $G$; thus
$W(G)\geq \left(\frac{3}{2}k-1\right)n-1$ for even $n\geq 2$.
~$\square$
\end{proof}

From Theorems \ref{mytheorem3}(1) and \ref{mytheorem7}, and taking
into account that a complete balanced $k$-partite graph $G$ with $n$
vertices in each part is overfull when $nk$ is odd, we have:

\begin{corollary}
\label{mycorollary1} If $G$ is a complete balanced $k$-partite graph
with $n$ vertices in each part, then $\chi^{\prime}(G)=(k-1)n$ if
and only if $nk$ is even.
\end{corollary}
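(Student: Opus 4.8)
The plan is to establish the biconditional by combining the regularity of $G$ with Theorem \ref{mytheorem7} for one direction and the overfull criterion for the other. First I would record the two quantities the argument needs: since each vertex is joined to all $(k-1)n$ vertices outside its own part, $G$ is $(k-1)n$-regular, so $\Delta(G)=(k-1)n$, while $|V(G)|=nk$ and $|E(G)|=\binom{k}{2}n^{2}$. In particular the assertion $\chi^{\prime}(G)=(k-1)n$ is literally the assertion $\chi^{\prime}(G)=\Delta(G)$, which is the form to which Theorem \ref{mytheorem3}(1) and the overfull remark apply.

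For the direction ``$nk$ even $\Rightarrow$ $\chi^{\prime}(G)=(k-1)n$'', I would invoke Theorem \ref{mytheorem7}: its proof constructs an explicit interval coloring of $G$, so $G$ is interval colorable. As $G$ is regular, Theorem \ref{mytheorem3}(1) then forces $\chi^{\prime}(G)=\Delta(G)=(k-1)n$.

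For the converse I would argue by contraposition, showing that $nk$ odd makes $G$ overfull and hence $\chi^{\prime}(G)=\Delta(G)+1=(k-1)n+1\neq(k-1)n$. The single computation is the edge count: when $nk$ is odd we have $\left\lfloor\frac{|V(G)|}{2}\right\rfloor=\frac{nk-1}{2}$, so
$$|E(G)|-\left\lfloor\frac{|V(G)|}{2}\right\rfloor\Delta(G)=\frac{(k-1)n}{2}\bigl(nk-(nk-1)\bigr)=\frac{(k-1)n}{2}>0,$$
using $k\geq 2$ and $n\geq 1$. Thus $|E(G)|>\left\lfloor\frac{|V(G)|}{2}\right\rfloor\Delta(G)$, so $G$ is overfull and the cited consequence of overfullness gives $\chi^{\prime}(G)=(k-1)n+1$.

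I expect no real obstacle here: Theorem \ref{mytheorem7} supplies the constructive half outright, and the overfull half is a one-line inequality. The only point needing care is to state explicitly the equivalence $\chi^{\prime}(G)=(k-1)n\Leftrightarrow\chi^{\prime}(G)=\Delta(G)$, so that both Theorem \ref{mytheorem3}(1) and the overfull criterion can be applied in their standard graph-theoretic form.
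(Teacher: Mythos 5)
Your proposal is correct and follows the paper's own derivation exactly: the forward direction comes from the interval coloring constructed in Theorem \ref{mytheorem7} combined with Theorem \ref{mytheorem3}(1) for regular graphs, and the converse from the observation that $nk$ odd makes $G$ overfull, hence class two. The only difference is that you write out the overfull edge-count inequality explicitly (correctly), where the paper merely asserts it.
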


From Theorems \ref{mytheorem3}(2) and \ref{mytheorem7}, we have:

\begin{corollary}
\label{mycorollary2} Let $G$ be a complete balanced $k$-partite
graph with $n$ vertices in each part and $nk$ is even. If
$(k-1)n\leq t\leq \left(\frac{3}{2}k-1\right)n-1$, then $G$ has an
interval $t$-coloring.
\end{corollary}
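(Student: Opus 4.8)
The plan is to deduce the corollary directly from the interpolation property of interval colorings of regular graphs, Theorem~\ref{mytheorem3}(2), together with the lower bound on $W(G)$ established in Theorem~\ref{mytheorem7}. First I would record that a complete balanced $k$-partite graph $G$ with $n$ vertices in each part is regular: every vertex is adjacent to all $n$ vertices in each of the other $k-1$ parts, so $G$ is $(k-1)n$-regular with $\Delta(G)=(k-1)n$. This regularity is exactly the hypothesis needed to apply Theorem~\ref{mytheorem3}.

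Next, since $nk$ is even, the discussion preceding Theorem~\ref{mytheorem7} (via Theorems~\ref{mytheorem1} and \ref{mytheorem3}(1)) shows that $G$ is interval colorable and that its least interval color count is $w(G)=\Delta(G)=(k-1)n$. On the other side, Theorem~\ref{mytheorem7} supplies the matching lower bound $W(G)\geq\left(\frac{3}{2}k-1\right)n-1$ for the largest interval color count. Hence the interval of integers $\left[(k-1)n,\,\left(\frac{3}{2}k-1\right)n-1\right]$ is contained in $[w(G),W(G)]$.

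Finally I would invoke Theorem~\ref{mytheorem3}(2): because $G$ is a regular interval-colorable graph, it admits an interval $t$-coloring for every $t$ with $w(G)\leq t\leq W(G)$. In particular, for each $t$ satisfying $(k-1)n\leq t\leq\left(\frac{3}{2}k-1\right)n-1$ such a coloring exists, which is precisely the assertion of the corollary. There is essentially no obstacle here: all of the combinatorial work is concentrated in the explicit edge-colorings $\alpha$ and $\beta$ constructed in the proof of Theorem~\ref{mytheorem7}, and the corollary merely repackages the two bounds $w(G)=(k-1)n$ and $W(G)\geq\left(\frac{3}{2}k-1\right)n-1$ through the regular-graph interpolation theorem.
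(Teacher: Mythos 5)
Your proposal is correct and follows exactly the route the paper takes: the paper deduces this corollary in one line from Theorem~\ref{mytheorem3}(2) and Theorem~\ref{mytheorem7}, using the fact (noted before Theorem~\ref{mytheorem7}) that $G$ is $(k-1)n$-regular with $w(G)=(k-1)n$ when $nk$ is even. Your write-up just makes those same steps explicit.
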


Also, note that the proof of the case 2 implies that if a graph $G$
with $n$ vertices is $(n-2)$-regular, then $\chi^{\prime}(G)=n-2$.\\

The next theorem improves the lower bound in Theorem
\ref{mytheorem7} for complete balanced $k$-partite graphs with even
$k$.

\begin{theorem}
\label{mytheorem8} Let $G$ be a complete balanced $k$-partite graph
with $n$ vertices in each part. If $k=p2^{q}$, where $p$ is odd and
$q\in \mathbb{N}$, then $W(G)\geq (2k-p-q)n-1$.
\end{theorem}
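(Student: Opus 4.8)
The plan is to deduce the bound from Theorem~\ref{mytheorem6} by exhibiting $G$ as a ``blow-up'' of the complete graph $K_k$ and transporting an interval coloring of $K_k$ through Lemma~\ref{mylemma}. Since $q\in\mathbb{N}$, the number $k=p2^{q}$ is even, so $K_k$ is interval colorable. Identifying the $k$ parts $V_{1},\dots,V_{k}$ of $G$ with the $k$ vertices of $K_k$ and each edge bundle $(V_{i},V_{j})$ with the edge $ij$ of $K_k$, the graph $G$ is precisely the graph obtained from $K_k$ by replacing every vertex with an independent set of $n$ vertices and every edge with a copy of $K_{n,n}$.

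First I would establish a general blow-up step: \emph{if $K_k$ has an interval $t$-coloring $\gamma$, then $G$ has an interval $\bigl((t+1)n-1\bigr)$-coloring.} Given $\gamma$, for the bundle $(V_{i},V_{j})$ whose $K_k$-edge receives color $c=\gamma(ij)$, I would color the edges of that copy of $K_{n,n}$ by the shifted pattern of Lemma~\ref{mylemma}, namely
\[
\alpha\!\left(v_{p}^{(i)}v_{q}^{(j)}\right)=(c-1)n+p+q-1,\qquad 1\le p\le n,\ 1\le q\le n .
\]
By Lemma~\ref{mylemma}, from the bundle towards part $j$ the vertex $v_{p}^{(i)}$ then collects exactly the interval $[p,p+n-1]\oplus(c-1)n$ of $n$ colors. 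As $j$ ranges over the neighbours of $i$ in $K_k$, the values $c=\gamma(ij)$ range over the set $S(i,\gamma)$, which by interval colorability is an interval $[a_{i},b_{i}]$ in which each value occurs once.

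The crucial observation is that for consecutive values $c$ and $c+1$ the blocks $[(c-1)n+p,\,cn+p-1]$ and $[cn+p,\,(c+1)n+p-1]$ are disjoint and adjacent, hence merge into a single interval. Consequently
\[
S\!\left(v_{p}^{(i)},\alpha\right)=\bigcup_{c=a_{i}}^{b_{i}}\bigl([p,p+n-1]\oplus(c-1)n\bigr)=\left[(a_{i}-1)n+p,\ b_{i}n+p-1\right],
\]
an interval of length $(b_{i}-a_{i}+1)n=(k-1)n=\Delta(G)$, and the colors at $v_{p}^{(i)}$ are pairwise distinct. Moreover the colors contributed by a single bundle of color $c$ are exactly $[(c-1)n+1,(c+1)n-1]$; since $\gamma$ uses every color of $[t]$, these overlapping blocks together cover $\bigl[(t+1)n-1\bigr]$, so $\alpha$ uses every color from $1$ to $(t+1)n-1$. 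Thus $\alpha$ is an interval $\bigl((t+1)n-1\bigr)$-coloring of $G$, and choosing $t=W(K_k)$ yields $W(G)\ge\bigl(W(K_k)+1\bigr)n-1$.

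Finally I would invoke Theorem~\ref{mytheorem6}. Writing $K_k=K_{2m}$ with $m=k/2=p2^{q-1}$ (so $p$ is odd and $q-1\ge0$), that theorem gives $W(K_k)\ge 4m-2-p-(q-1)=2k-p-q-1$, whence
\[
W(G)\ge\bigl(W(K_k)+1\bigr)n-1\ge(2k-p-q)n-1 ,
\]
as claimed. The only delicate point I anticipate is verifying that the shifted $n$-color blocks glue into a single interval at every vertex; this is exactly the property that the structured coloring of Lemma~\ref{mylemma} is tailored to provide, and the remaining steps are routine bookkeeping.
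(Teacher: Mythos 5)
Your proposal is correct and follows essentially the same route as the paper: the same blow-up coloring $\beta\left(v^{(i)}_{r}v^{(j)}_{s}\right)=(\gamma(ij)-1)n+r+s-1$, the same use of Lemma~\ref{mylemma} to glue the shifted $n$-blocks into a single interval of length $(k-1)n$ at each vertex, and the same invocation of Theorem~\ref{mytheorem6} with $k=2m$, $m=p2^{q-1}$. Your explicit isolation of the general step ``an interval $t$-coloring of $K_k$ yields an interval $((t+1)n-1)$-coloring of $G$'' is a clean packaging of what the paper does inline, but the substance is identical.
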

\begin{proof}
Let $V(G)=\left\{v^{(i)}_{j}\colon\,1\leq i\leq k,1\leq j\leq
n\right\}$ and $V(K_{k})=\{u_{1},\ldots,u_{k}\}$. Also, let
$E(G)=\left\{v^{(i)}_{r}v^{(j)}_{s}\colon\,1\leq i<j\leq k,1\leq
r\leq n,1\leq s\leq n\right\}$ and
$E(K_{k})=\{u_{i}u_{j}\colon\,1\leq i<j\leq k\}$.

Since $k=p2^{q}$, where $p$ is odd and $q\in \mathbb{N}$, by Theorem
\ref{mytheorem6}, there exists an interval $(2k-1-p-q)$-coloring
$\alpha$ of $K_{k}$. Now we define an edge-coloring $\beta$ of the
graph $G$.

For each edge $v^{(i)}_{r}v^{(j)}_{s}\in E(G)$ with $1\leq i<j\leq
k$ and $r=1,\ldots,n$, $s=1,\ldots,n$, define a color
$\beta\left(v^{(i)}_{r}v^{(j)}_{s}\right)$ as follows:

\begin{center}
$\beta\left(v^{(i)}_{r}v^{(j)}_{s}\right)=\left(\alpha\left(u_{i}u_{j}\right)-1\right)n+r+s-1$.
\end{center}

By Lemma \ref{mylemma} and the definition of $\beta$, and taking
into account that $\max S\left(u_{i},\alpha\right)-\min
S\left(u_{i},\alpha\right)=k-2$ for $i=1,\ldots,k$, we have

\begin{eqnarray*}
S\left(v_{j}^{(i)},\beta\right)&=&\bigcup_{l=\min
S(u_{i},\alpha)}^{\max S(u_{i},\alpha)}\left([j,j+n-1]\oplus
(l-1)n\right)=\\
&=&[j+\left(\min S(u_{i},\alpha)-1\right)n,j+\max
S(u_{i},\alpha)n-1]
\end{eqnarray*}
for $i=1,\ldots,k$ and $j=1,\ldots,n$, and
\begin{eqnarray*}
\bigcup_{i=1}^{k}\bigcup_{j=1}^{n}S\left(v_{j}^{(i)},\beta\right)&=&[(2k-p-q)n-1].
\end{eqnarray*}

This shows that $\beta$ is an interval $((2k-p-q)n-1)$-coloring of
the graph $G$; thus $W(G)\geq (2k-p-q)n-1$. ~$\square$
\end{proof}

From Theorems \ref{mytheorem3}(2) and \ref{mytheorem8}, we have:

\begin{corollary}
\label{mycorollar3} Let $G$ be a complete balanced $k$-partite graph
with $n$ vertices in each part and $k=p2^{q}$, where $p$ is odd and
$q\in \mathbb{N}$. If $(k-1)n\leq t\leq (2k-p-q)n-1$, then $G$ has
an interval $t$-coloring.
\end{corollary}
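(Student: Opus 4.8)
The plan is to deduce this statement directly from the "gap-filling" property for regular graphs, Theorem~\ref{mytheorem3}(2), combined with the lower bound on $W(G)$ supplied by Theorem~\ref{mytheorem8}. The crucial preliminary observation is that a complete balanced $k$-partite graph $G$ with $n$ vertices in each part is regular: every vertex is adjacent to exactly the $(k-1)n$ vertices lying outside its own part, so $G$ is $(k-1)n$-regular and $\Delta(G)=(k-1)n$. This regularity is what makes Theorem~\ref{mytheorem3} applicable at all.

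First I would confirm that $G$ is interval colorable and pin down the value of $w(G)$. Since $k=p2^{q}$ with $q\in\mathbb{N}$, we have $q\ge 1$, so $k$ is even and hence $nk$ is even. By Theorem~\ref{mytheorem1} this gives $\chi^{\prime}(G)=(k-1)n=\Delta(G)$, and therefore Theorem~\ref{mytheorem3}(1) shows that $G$ is interval colorable. Moreover, as already recorded in the discussion preceding Theorem~\ref{mytheorem7}, for such $G$ one has $w(G)=\Delta(G)=(k-1)n$, so the left endpoint of the allowed range for $t$ is precisely $w(G)$.

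Next I would invoke Theorem~\ref{mytheorem8}, whose hypotheses ($k=p2^{q}$ with $p$ odd and $q\in\mathbb{N}$) match ours exactly and which yields $W(G)\ge (2k-p-q)n-1$. Combining these facts, for any integer $t$ with $(k-1)n\le t\le (2k-p-q)n-1$ we obtain $w(G)=(k-1)n\le t$ and $t\le (2k-p-q)n-1\le W(G)$, that is, $w(G)\le t\le W(G)$. Applying Theorem~\ref{mytheorem3}(2) then produces an interval $t$-coloring of $G$, which is exactly the assertion of the corollary.

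There is essentially no computational obstacle to overcome here; the statement is an immediate consequence once the ingredients are assembled. The only points that genuinely require care are the two verifications in the second paragraph: that $G$ is regular (so that the interval-coloring results for regular graphs apply), and that the left endpoint $(k-1)n$ of the prescribed range truly equals $w(G)$ rather than merely being a lower bound for $\Delta(G)$. With regularity established and $w(G)=(k-1)n$ in hand, the conclusion follows directly from the "no gaps between $w(G)$ and $W(G)$" property of interval colorings of regular graphs.
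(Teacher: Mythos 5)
Your proposal is correct and matches the paper's own derivation, which obtains the corollary precisely by combining Theorem~\ref{mytheorem3}(2) with Theorem~\ref{mytheorem8} (using the facts, recorded before Theorem~\ref{mytheorem7}, that $G$ is $(k-1)n$-regular and $w(G)=(k-1)n$ when $nk$ is even). Your added check that $q\in\mathbb{N}$ forces $k$ even, hence $nk$ even, is exactly the implicit step the paper relies on.
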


\section{Problems}\

In the previous section we obtained some results on interval
colorings of complete balanced multipartite graphs, but very small
is known about interval colorings of complete unbalanced
multipartite graphs. In fact, there are only two results on interval
colorings of complete unbalanced multipartite graphs. Let
$n_{1}\leq\cdots \leq n_{k}$ be positive integers. The complete
multipartite graph $K_{n_{1},\ldots,n_{k}}$ is a complete
$k$-partite graph for which $\vert V_{i}\vert =n_{i}$,
$i=1,\ldots,k$. The first result is Theorem \ref{mytheorem4} which
gives all possible values of the number of colors in interval
colorings of $K_{n_{1},n_{2}}$. The second result was obtained by
Feng and Huang \cite{b6}. In \cite{b6}, they proved that the
complete $3$-partite graph $K_{1,1,n}$ is interval colorable if and
only if $n$ is even. Now we would like to formulate some problems on
interval colorings of complete multipartite graphs:

\begin{problem}\label{myproblem1}
Characterize all interval colorable complete multipartite graphs.
\end{problem}

\begin{problem}\label{myproblem2}
Find the exact values of $w(G)$ and $W(G)$ for interval colorable
complete multipartite graphs G.
\end{problem}

\begin{problem}\label{myproblem3}
Find the exact value of $W\left(K_{n,\ldots,n}\right)$ for interval
colorable complete balanced $k$-partite graphs $K_{n,\ldots,n}$.
\end{problem}

Note that even a special case of Problem \ref{myproblem3} is open:
the problem of determining the exact value of $W(K_{2n})$ for
complete graph $K_{2n}$.

\end{document}